\numberwithin{equation}{section}
\newtheorem{theorem}{Theorem}
\newtheorem{corollary}[theorem]{Corollary}
\newtheorem{lemma}[theorem]{Lemma}
\date{}
\begin{document}

\title[stability of  an abstract system with infinite history ]{stability of  an abstract system with infinite history }
\author[ Aberrahmane Youkana]
{ Abderrahmane Youkana}

\address{Abderrahmane Youkana \newline
Laboratory of Mathematical Techniques for Applications, Faculty of Mathematics and Computer Science,
University of Batna2, 05078 Batna,  Algeria.}
\email{abderrahmane.youkana@univ-batna2.dz}
\email{abder.youkana@yahoo.fr}
\maketitle
\begin{abstract}
This work is concerned with stabilization of an abstract linear dissipative integrodiffrential equation with infinite memory modeling linear viscoelasticity where the relaxation function satisfies $ g'(t)\leq -\xi(t)g^p(t), \forall t\geq 0, 1\leq p<\frac{3}{2}$. Our result improves earlier results in the literature.
\end{abstract}
\section{Introduction}
Let us denote by $\mathcal{H}$ a  Hilbert space with inner product and related norm denoted by $\langle,\rangle$ and   $ \Vert. \Vert$ respectively.
Let $A : \mathcal{D}(A) \longrightarrow \mathcal{H}$ and $ B : \mathcal{D}(B) \longrightarrow \mathcal{H} $ be self-adjoint linear positive definite operators with domains  $\mathcal{D}(A) \subset \mathcal{D}(B) \subset \mathcal{H}  $ \ such that the embeddings are dense and compact.\\

We are interested in energy decay of the solution u to the following initial boundary
value problem 
\begin{eqnarray}\label{I1.1}
 u_{tt}  +Au-\int_0^{+\infty} g(s)Bu(t-s) \ ds=0,  \qquad \textrm{$\forall t>0 $},
\end{eqnarray}
with initial conditions
\begin{equation}\label{I1.2}
\left\lbrace
\begin{array}{ll}\bigskip
u(-t)  = u_0(t), \qquad  \forall t \in \mathbb{R}_+, \\\bigskip
u_t(0)=u_1,
\end{array}
\right.
\end{equation}
where $ u_0$ and $u_1$ are given history and initial data, 
g is a positive and nonincreasing function called the relaxation function.\\
\subsection{Well Posedness}
By following the brilliant intuition of Dafermos \cite{Dafer70,Dafer76}, we introduce the
relative history of $u$ defined as
\begin{eqnarray*}
\left\lbrace
\begin{array}{ll}\bigskip
\eta^t(s)=u(t)-u(t-s),   & \forall t,s \in \mathbb{R}_+,\\\bigskip
\eta_0(s)=\eta^0(s)=u_0(0)-u_0(s), & \qquad \forall s \in \mathbb{R}_+.
\end{array}
\right.
\end{eqnarray*}
Equation \eqref{I1.1}-\eqref{I1.2}  can be rewritten as an abstract linear first-order system of the form
\begin{eqnarray}\label{I1.3}
\left\lbrace
\begin{array}{ll}\bigskip
\mathcal{U}_t+ \mathcal{A}\mathcal{U}(t)=0, \qquad \forall t>0,  \\\bigskip
\mathcal{U}(0)=\mathcal{U}_0,
\end{array}
\right.
\end{eqnarray}
where $\mathcal{U}_0=(u_0(0),u_1,\eta_0)^T \in \mathcal{H}= \mathcal{D}(A^{\frac{1}{2}})\times H \times L_g^2(\mathbb{R}_+, \mathcal{D}(B^{\frac{1}{2}})), \ \mathcal{U}=(u,u_t,\eta^t)^T$ and\\
$L^2_g(\mathbb{R}_+,\mathcal{D}(B^{\frac{1}{2}}))$ is the weighted space with respect to the
measure $ g(s) ds $ defined by 
\begin{eqnarray*}
L^2_g(\mathbb{R}_+,\mathcal{D}(B^{\frac{1}{2}}))=\left\lbrace  z: \mathbb{R}_+\longrightarrow \mathcal{D}(B^{\frac{1}{2}}), \
\int_0^{+\infty} g(s) \Vert B^{\frac{1}{2}}z(s) \Vert^2 \ ds< +\infty  \right\rbrace
\end{eqnarray*}
endowed with the inner product
\begin{eqnarray*}
\langle z_1,z_2\rangle_{L^2_g(\mathbb{R}_+,\mathcal{D}(B^{\frac{1}{2}}))}= \int_0^{+\infty } g(s) \langle B^{\frac{1}{2}}z_1(s) ,B^{\frac{1}{2}}z_2(s)\rangle \ ds. 
\end{eqnarray*}
The operator $\mathcal{A}$ is defined by 
\begin{eqnarray*}
\mathcal{A}(v,w,z)^T=\left(-w,Av-g_0Bv+\int_0^{+\infty} g(s)  B z(s) \ ds,  \frac{\partial z}{\partial s}-w  \right)^T,
\end{eqnarray*}
where $ g_0= \int_0^{+\infty} g(s) \ ds$, 
\begin{eqnarray*}
\mathcal{D}(\mathcal{A})=\left\lbrace (v,w,z)^T \in \mathcal{H}, \ v \in \mathcal{D}(A), \ 
w \in \mathcal{D}(A^{\frac{1}{2}})), \  z \in \mathcal{L}_g , \ \int_0^{+\infty} g(s)z(s) \ ds \in \mathcal{D}(B)
 \right\rbrace,  
\end{eqnarray*}
and $ \mathcal{L}_g=\left\lbrace  z \in L^2_g(\mathbb{R}_+,\mathcal{D}(B^{\frac{1}{2}})), \partial_s z \in
L^2_g(\mathbb{R}_+,\mathcal{D}(B^{\frac{1}{2}})), z(0)=0
  \right\rbrace. $\\

As shown in  \cite{Assymp07} for example, under the assumptions $ (\mathbb{H}_1)$ and $ (\mathbb{H}_2)$ below, the space $ \mathcal{H}$ endowed with the inner product	
\begin{eqnarray*}
\langle (v_1,w_1,z_1)^T, (v_2,w_2,z_2)^T \rangle_{\mathcal{H}}=
\langle A^{\frac{1}{2}}v_1, A^{\frac{1}{2}}v_2\rangle -g_0\left\langle B^{\frac{1}{2}} v_1,B^{\frac{1}{2}}v_2 \right\rangle  +\langle w_1, w_2\rangle+\langle z_1,z_2\rangle_{L^2_g(\mathbb{R}_+,\mathcal{D}(B^{\frac{1}{2}}))}
\end{eqnarray*}
is a Hilbert space, $ \mathcal{D}(\mathcal{A}) \subset \mathcal{H}$ with dense embedding, and $\mathcal{A}$ is the infinitesimal generator of a linear contraction  $\mathcal{C}_0$ semigroup on $ \mathcal{H}$. Therefore, the classical semigroup theory implies that (see \cite{Pazy}), for any $\mathcal{U}_0 \subset \mathcal{H}$, the system  \eqref{I1.3} has a
unique weak solution 
\begin{eqnarray*}
\mathcal{U} \in  \mathcal{C}(\mathbb{R}_+,\mathcal{H}).
\end{eqnarray*}
Moreover, if $ \mathcal{U}_0 \in \mathcal{D}(\mathcal{A})$, then the solution of \eqref{I1.3} is classical; that is
\begin{eqnarray*}
\mathcal{U}\in C^1(\mathbb{R}_+,\mathcal{H})\cap C(\mathbb{R}_+, \mathcal{D}(\mathcal{A})).
\end{eqnarray*}
\subsection{Stability}
Problems related to \eqref{I1.1}-\eqref{I1.2} have been studied by many authors and several stability results have been established; see \cite{Ch06,FGP10,GMP05,Pata06,Pata09}. The exponential and polynomial decay of the solutions of equation \eqref{I1.1}-\eqref{I1.2} have been studied in \cite{A11}, where it was assumed that $ (\mathbb{H}_1)$ holds and
\begin{itemize}
\item  $ (A_1)$\  There exists an increasing strictly convex function  $G : \mathbb{R}_+\longrightarrow \mathbb{R}_+$ of class 
$ C^1(\mathbb{R}_+) \cap C^2(]0,+\infty[)$ satisfying
\begin{eqnarray*}
G(0)=G'(0)=0  \qquad \textit{and}  \qquad  \lim_{t\longrightarrow  +\infty} G'(t)=+\infty,
\end{eqnarray*}
such that 
\begin{eqnarray*}
\int_0^{+\infty} \frac{g(s)}{G^{-1}(-g'(s))} \ ds + \sup_{s\in \mathbb{R}_+} \frac{g(s)}{G^{-1}(-g'(s))}<+ \infty.
\end{eqnarray*}
\end{itemize}
The author established a general decay estimate given in term of the convex function G. His result generalizes
the usual exponential and polynomial decay results found in the literature. He considered two cases corresponding to the following two conditions on $A$ and $B$:
\begin{eqnarray}\label{H1.4}
\exists a_2>0: \qquad \Vert A^{\frac{1}{2}}v \Vert^2 \leq a_2 \Vert B^{\frac{1}{2}}v \Vert^2, \quad  \forall v \in \mathcal{D}(A^{\frac{1}{2}}). 
\end{eqnarray}
or 
\begin{eqnarray}\label{H1.5}
\exists a_2>0: \qquad\Vert A^{\frac{1}{2}}v \Vert^2 \leq a_2 \Vert A^{\frac{1}{2}} B^{\frac{1}{2}}v \Vert^2, \quad  \forall v \in \mathcal{D}(A^{\frac{1}{2}}B^{\frac{1}{2}}).
\end{eqnarray}
The study of viscoelastic problem  \eqref{I1.1}-\eqref{I1.2} in the particular case $A=B$ was considered by Guesmia and Messaoudi \cite{A.M14}. The authors considered \eqref{H1.6} below with $p=1$ and extended the decay result known for problems with finite history to those with infinite history. In addition, they improved, in some cases, some decay results obtained earlier in \cite{A11}.\\

Very recently, the authors of  \cite{S17} considered the condition 
\begin{eqnarray}\label{H1.6}
 g'(t) \leq - \xi(t) g^p(t), \qquad \forall t\in \mathbb{R}_+,
 \end{eqnarray}
 where $ \xi$ is a positive and nonincreasing function and $ 1\leq p < \frac{3}{2}$, with the objective of improving the decay rate for problems with finite memory.\\

Condition \eqref{H1.6} gives a better description of the growth of g at infinity  and allows to obtain a precise estimate of the energy that is more general than the $ " $stronger $"$ one ($ \xi $ constant and $ p \in [1,\frac{3}{2}[)$ used in the case of past history control \cite{Timo09, Timo08}. We also refer the reader to some recent researches under the condition \eqref{H1.6} with finite history and viscolelastic term \cite{S17} for related results. The authors proved a general decay rate from which the exponential decay  is only a special case. Moreover, the optimal polynomial decay is easily and directly obtained without restrictive conditions.\\

With the above motivations and inspired by the approach of \cite{S17}, in this paper,  we intend to study the general decay result to  problem \eqref{I1.1}-\eqref{I1.2} under suitable  assumptions on the initial data and the relaxation function $g$. Our main contribution is an enhancement to the results of \cite{A.M14,A11} in a way that our result gives a better rate of decay in the polynomial case.

The plan of this paper is as follows. In Section 2, we present some assumptions, preliminaries and some technical lemmas needed to establish the proofs of our results. Section 3 is devoted to the statement and the proof of our main result.
  \section{Preliminaries}
In this section, we shall present some necessary assumptions and prove some important inequalities that will become useful in later stages.\\
Let us assume that\\
$(\mathbb{H}_1)$ There exist positive constants $a_0$ and $a_1$ such that
\begin{equation*}
a_1  \Vert v \Vert^2 \leq  \Vert B^{\frac{1}{2}} v \Vert^2 \leq  a_0\Vert A^{\frac{1}{2}}v \Vert^2, \quad  \forall v \in \mathcal{D}( A^{\frac{1}{2}}).
\end{equation*}
$(\mathbb{H}_2)$
$g : \mathbb{R}_+\rightarrow \mathbb{R}_+ $  is a differentiable nonincreasing  function satisfying
\begin{eqnarray*}
 0<g_0< \frac{1}{a_0}. 
\end{eqnarray*}
$(\mathbb{H}_3)$ There exists a nonincreasing differentiable function  \ $ \xi : \  \mathbb{R}_+\rightarrow \mathbb{R}_+ $  and $ 1\leq p <\frac{3}{2}$ satisfying  \eqref{H1.6}.\\
Throughout the sequel, we denote by $ C$ a generic positive constant which may vary from line to line.
We start with the follwing lemma.
\begin{lemma}
Let $F$and $h$ be two positive functions, and $\alpha, c_1$  and $c_2$ be three positive constants
such that 
\begin{eqnarray}\label{IL11}
F'(t) \leq - c_1\xi^{\alpha+1}(t) F^{\alpha +1}(t) + c_2 h^{\alpha+1}(t), \quad  \forall t \in \mathbb{R}_+.
\end{eqnarray}
Then, for some constant $C > 0$, we have
\begin{eqnarray} \label{IL12}
F(t) \leq C (1+t)^{-\frac{1}{\alpha}} \xi^{-\frac{\alpha+1}{\alpha}}(t)\left[ 1+  \int_0^t (s+1)^{\frac{1}{\alpha}} \xi^{\frac{\alpha+1}{\alpha}} (s) h^{\alpha+1}(s)\ ds 
 \right], \qquad \forall t \in \mathbb{R}_+. 
\end{eqnarray}
\end{lemma}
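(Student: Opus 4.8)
The plan is to recognize the right-hand side of \eqref{IL12} itself as a supersolution of the differential inequality \eqref{IL11} and then close the argument by an elementary comparison principle, thereby avoiding any attempt to integrate the nonlinear (Bernoulli-type) inequality directly. Writing $m(t):=(1+t)^{1/\alpha}\,\xi^{(\alpha+1)/\alpha}(t)$, so that $m^{\alpha}(t)=(1+t)\,\xi^{\alpha+1}(t)$, and $N(t):=\int_0^t m(s)\,h^{\alpha+1}(s)\,ds$, I would set
\[
\Phi(t):=C\,m(t)^{-1}\bigl[\,1+N(t)\,\bigr],
\]
where $C>0$ is fixed below. Since $(s+1)^{1/\alpha}\xi^{(\alpha+1)/\alpha}(s)=m(s)$, this $\Phi$ is exactly the bound claimed in \eqref{IL12}, so it suffices to prove $F\le\Phi$ on $\mathbb{R}_+$. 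Note $F,\Phi>0$, which is what lets me manipulate the (possibly non-integer) powers freely.

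First I would differentiate: using $N'=m\,h^{\alpha+1}$ one gets $\Phi'=-\tfrac{m'}{m}\Phi+C\,h^{\alpha+1}$. Hypothesis $(\mathbb{H}_3)$ enters precisely here, since
\[
\frac{m'}{m}=\frac{1}{\alpha(1+t)}+\frac{\alpha+1}{\alpha}\,\frac{\xi'}{\xi}\le\frac{1}{\alpha(1+t)},
\]
because $\xi$ is nonincreasing; as $\Phi>0$ this yields $\Phi'\ge -\tfrac{1}{\alpha(1+t)}\Phi+C\,h^{\alpha+1}$. To conclude that $\Phi$ is a supersolution, i.e. $\Phi'\ge -c_1\xi^{\alpha+1}\Phi^{\alpha+1}+c_2 h^{\alpha+1}$, I would take $C\ge c_2$ (handling the source term) and reduce the rest to the pointwise inequality $c_1\xi^{\alpha+1}\Phi^{\alpha}\ge\tfrac{1}{\alpha(1+t)}$. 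This I would obtain from the crude lower bound $\Phi\ge C\,m^{-1}$ (the integral term being nonnegative), which gives $\Phi^{\alpha}\ge C^{\alpha}m^{-\alpha}=C^{\alpha}/[(1+t)\xi^{\alpha+1}]$ and hence $c_1\xi^{\alpha+1}\Phi^{\alpha}\ge c_1C^{\alpha}/(1+t)$, satisfactory as soon as $C^{\alpha}\ge 1/(\alpha c_1)$. Fixing finally
\[
C\ge\max\bigl\{\,c_2,\ (\alpha c_1)^{-1/\alpha},\ F(0)\,\xi^{(\alpha+1)/\alpha}(0)\,\bigr\}
\]
also secures the initial inequality $\Phi(0)=C\,\xi^{-(\alpha+1)/\alpha}(0)\ge F(0)$.

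It then remains to deduce $F\le\Phi$ from $F'\le R(t,F)$, $\Phi'\ge R(t,\Phi)$ and $F(0)\le\Phi(0)$, where $R(t,u):=-c_1\xi^{\alpha+1}(t)\,u^{\alpha+1}+c_2 h^{\alpha+1}(t)$. The hard part is exactly this comparison step, and it hinges on one structural observation rather than on any closed-form solution: for each fixed $t$ the map $u\mapsto R(t,u)$ is nonincreasing on $[0,\infty)$, since its derivative $-c_1(\alpha+1)\xi^{\alpha+1}u^{\alpha}$ is $\le 0$ (this is where the dissipative sign of the leading term is used). Consequently, on any subinterval where $W:=F-\Phi>0$ one has $F>\Phi$, hence $R(t,F)\le R(t,\Phi)$ and $W'\le R(t,F)-R(t,\Phi)\le 0$; combined with $W(0)\le 0$ and continuity of $W$, a first-crossing argument (if $W(t_1)>0$, take the last time $t_0<t_1$ with $W(t_0)=0$ and integrate $W'\le 0$ on $(t_0,t_1]$ for a contradiction) forces $W\le 0$ throughout. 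This gives $F\le\Phi$, which is precisely \eqref{IL12}, the computation of $\Phi'$ and the admissible choice of $C$ being otherwise routine.
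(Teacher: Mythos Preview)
Your argument is correct and genuinely different from the paper's. The paper proceeds by a substitution: it multiplies \eqref{IL11} by $\xi^{\beta}$ with $\beta=(\alpha+1)/\alpha$, uses $\xi'\le 0$ to absorb the factor into the derivative, and observes that $\varphi:=\xi^{\beta}F$ then satisfies the \emph{autonomous} inequality $\varphi'\le -c_1\varphi^{\alpha+1}+c_2\xi^{\beta}h^{\alpha+1}$, to which a decay lemma from Cavalcanti--Guesmia \cite{Ge} applies directly; dividing back by $\xi^{\beta}$ gives \eqref{IL12}. You instead recognize the right-hand side of \eqref{IL12} as a supersolution and close with a comparison principle driven by the monotonicity of $u\mapsto -c_1\xi^{\alpha+1}u^{\alpha+1}$. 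What your approach buys is self-containment (no external citation) and an explicit admissible constant $C$ in terms of $c_1,c_2,\alpha,F(0),\xi(0)$; what the paper's approach buys is brevity and modularity, since the change of variable $\varphi=\xi^{(\alpha+1)/\alpha}F$ cleanly factors out the time-dependent coefficient and reduces everything to a known model inequality. Both routes use $\xi'\le 0$ at essentially the same place --- you in bounding $m'/m$, the paper in passing from $\xi^{\beta}F'$ to $(\xi^{\beta}F)'$.
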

\begin{proof}
Multiplying \eqref{IL11} by $ \xi^\beta $, where $ \beta>1$ that will be defined later, we find
\begin{eqnarray}\label{IPL1}
\xi^\beta(t) F'(t) \leq - c_1 \xi^{\alpha+1+\beta}(t) F^{\alpha +1}(t) + c_2\xi^\beta(t) h^{\alpha+1}(t).
\end{eqnarray}
By taking advantage of the fact that $ \xi $ is a nonincreasing function, we get
\begin{eqnarray}\label{IPL2}
\left( \xi^\beta(t) F(t)\right)' \leq - c_1 \xi^{\alpha+1+\beta}(t) F^{\alpha +1}(t) + c_2\xi^\beta(t) h^{\alpha+1}(t).
\end{eqnarray}
Also by noting $ \varphi(t) = \xi^\beta(t) F(t)$, and taking  $ \beta= \frac{\alpha+1}{\alpha}$, we obtain 
\begin{eqnarray}\label{IPL3}
\varphi'(t) \leq - c_1\varphi^{\alpha+1}(t)  + c_2\xi^\beta(t) h^{\alpha+1}(t).
\end{eqnarray} 
Following the same steps as in \cite{Ge}, we then find that
\begin{eqnarray}
\varphi(t) \leq C (1+t)^{-\frac{1}{\alpha}}\left[ 1+  \int_0^t \xi^{\frac{\alpha+1}{\alpha}} (s) h^{\alpha+1}(s) (s+1)^{\frac{1}{\alpha}} \ ds 
 \right]. 
\end{eqnarray}
Therefore  \eqref{IL12} is established.
\end{proof}
\begin{lemma}
Assuming that g satisfies $(\mathbb{H}_2)$ and $(\mathbb{H}_3)$ then
\begin{eqnarray}\label{IL2.1}
\int_0^{+\infty} \xi(t) g^{1-\sigma}(t) \ dt < +\infty, \qquad \forall \sigma<2-p.
\end{eqnarray}
\end{lemma}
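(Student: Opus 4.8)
The plan is to turn the pointwise differential inequality of $(\mathbb{H}_3)$ into an integrable majorant by recognizing an exact derivative. Starting from \eqref{H1.6} written as $\xi(t) g^p(t) \le -g'(t)$, I would multiply both sides by the nonnegative factor $g^{1-\sigma-p}(t)$, which is legitimate because $(\mathbb{H}_2)$ guarantees $g>0$ everywhere. Since $g^p(t)\,g^{1-\sigma-p}(t)=g^{1-\sigma}(t)$, this produces the pointwise estimate
\begin{eqnarray*}
\xi(t) g^{1-\sigma}(t) = \xi(t) g^p(t)\, g^{1-\sigma-p}(t) \le -g'(t)\, g^{1-\sigma-p}(t), \qquad \forall t \in \mathbb{R}_+,
\end{eqnarray*}
whose right-hand side is nonnegative because $g$ is nonincreasing.

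The next step is to integrate this majorant explicitly. Provided the exponent $2-p-\sigma$ is nonzero, one has the exact-derivative identity
\begin{eqnarray*}
-g'(t)\, g^{1-\sigma-p}(t) = -\frac{1}{2-p-\sigma}\,\frac{d}{dt}\Big( g^{2-p-\sigma}(t)\Big),
\end{eqnarray*}
so that integrating the previous inequality over $[0,T]$ and telescoping gives
\begin{eqnarray*}
\int_0^T \xi(t) g^{1-\sigma}(t)\, dt \le \frac{g^{2-p-\sigma}(0) - g^{2-p-\sigma}(T)}{2-p-\sigma}.
\end{eqnarray*}
Because the integrand on the left is nonnegative, the map $T \mapsto \int_0^T \xi g^{1-\sigma}$ is nondecreasing, so a uniform bound in $T$ immediately yields \eqref{IL2.1} upon letting $T \to +\infty$.

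The decisive point — and the reason the threshold $2-p$ appears in the statement — is the sign of this exponent. The hypothesis $\sigma < 2-p$ is exactly the condition $2-p-\sigma > 0$, under which $g^{2-p-\sigma}(T) \ge 0$ and the boundary term at $T$ can simply be discarded, leaving the $T$-independent bound $g^{2-p-\sigma}(0)/(2-p-\sigma)$; this is finite since $g(0)$ is a finite positive number. Were $2-p-\sigma$ negative instead, the quantity $g^{2-p-\sigma}(T)$ would grow without bound as $g(T) \downarrow 0$, and the estimate would collapse — so the strict inequality $\sigma < 2-p$ is not a technicality but the crux of the argument.

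I expect no serious obstacle here; the calculation is short once the right factorization $g^{1-\sigma}=g^p\cdot g^{1-\sigma-p}$ is chosen. The only care needed is the bookkeeping of exponents and the observation that the precise range $1\le p<\tfrac{3}{2}$ of $(\mathbb{H}_3)$ plays no role in this particular lemma beyond supplying the standing structure ($g$ positive, nonincreasing, and bounded by the differential inequality); the finiteness depends solely on $\sigma < 2-p$.
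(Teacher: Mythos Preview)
Your argument is correct. The paper does not supply its own proof of this lemma but simply refers the reader to \cite{S17}; the computation you carry out --- multiplying the differential inequality $\xi g^{p}\le -g'$ by $g^{1-\sigma-p}$ and recognizing the right-hand side as $-\tfrac{1}{2-p-\sigma}\tfrac{d}{dt}g^{2-p-\sigma}$ --- is precisely the standard one-line derivation behind that reference, and your identification of the threshold $\sigma<2-p$ with positivity of the exponent is exactly right.
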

\begin{proof}
See \cite{S17}.
\end{proof}

\section{Decay of solutions}
In this section, we aim to investigate the asymptotic behavior of solutions for problem \eqref{I1.1}-\eqref{I1.2}.\\
Note that, for any regular solution $ u $ of the problem \eqref{I1.1}-\eqref{I1.2}, it is straightforward to see that 
\begin{eqnarray}\label{ID3.1}
E'(t)&=& \frac{1}{2} \int_0^{+\infty} g'(s) \Vert B^{\frac{1}{2}} \eta^t(s) \Vert^2 \ ds, \quad  \forall t \in \mathbb{R}_+,
\end{eqnarray}
where
\begin{eqnarray}\label{ID3.2}
E(t)&=& \frac{1}{2} \Vert \mathcal{U}(t) \Vert^2_{\mathcal{H}} \nonumber\\
&=&  \frac{1}{2} \left(  \Vert A^{\frac{1}{2}} u(t) \Vert^2 - g_0 \Vert B^{\frac{1}{2}} u(t) \Vert^2+ \Vert u_t(t) \Vert^2 
+ \int_0^{+\infty} g(s)  \Vert B^{\frac{1}{2}} \eta^t(s) \Vert^2 \ ds   \right). 
\end{eqnarray}
\begin{theorem}
Assume that $(\mathbb{ H}_{1})$, $(\mathbb{ H}_{2})$ and $(\mathbb{ H}_{3})$ hold.\\
\begin{enumerate}
\item  Let $ \mathcal{U}_0 \in H$ and $ \mathcal{U}$ be the solution of $ \eqref{I1.3}$.\\
 If \eqref{H1.4} holds, and if,  
\begin{eqnarray}\label{ID3.3}
\exists m_0>0: \quad \Vert B^{\frac{1}{2}} u_0(s) \Vert \leq m_0, \qquad \forall s>0,
\end{eqnarray}
then there exists a positive  constant $C$ such that, for all $t \in \mathbb{R}_+$,
\begin{equation}\label{ID3.5}
E(t) \leq C (1+t)^{-\frac{1}{2p-2}} \xi^{-\frac{2p-1}{2p-2}}(t)\left[ 1+  \int_0^t  (s+1)^{\frac{1}{2p-2}} \xi^{\frac{2p-1}{2p-2}} (s) h^{2p-1}(s)  \ ds 
 \right],
\end{equation}
where $ h(t)=\xi(t) \int_t^{+\infty}g(s) \ ds$.\\

Moreover, if 
\begin{equation}\label{ID3.6}
\int_{0}^{+\infty }(1+t)^{-\frac{1}{2p-2}} \xi^{-\frac{2p-1}{2p-2}}(t)\left[ 1+  \int_0^t  (s+1)^{\frac{1}{2p-2}} \xi^{\frac{2p-1}{2p-2}} (s) h^{2p-1}(s)  \ ds 
 \right] \ dt <+\infty ,
\end{equation}
then, for all $t\in \mathbb{R}_+$,
\begin{equation}\label{ID3.7}
E(t)\leq C (1+t)^{-\frac{1}{p-1}} \xi^{-\frac{p}{p-1}}(t)\left[ 1+ \int_0^t (s+1)^{\frac{1}{p-1}} \xi^{\frac{p}{p-1}} (s) h^{p}(s)  \ ds 
 \right]. 
\end{equation}
\item  Let $ \mathcal{U}_O \in D(A) \times D(A^\frac{1}{2}) \times L^2_g( \mathbb{R}_+, D(A^{\frac{1}{2}} B^{\frac{1}{2}})$ and
$ \mathcal{U}$ be the solution of $ \eqref{I1.3}$.\\
If  \eqref{H1.5} holds, and if, 
\begin{eqnarray}\label{H1.52}
\exists m_0>0: \quad \Vert A^{\frac{1}{2}} B^{\frac{1}{2}}u_0(s) \Vert \leq m_0, \quad \forall s>0,
\end{eqnarray}
then there exists a positive constant $C$ such that, for all $t \in \mathbb{R}_+$,
\begin{eqnarray}
E(t) \leq C \left(  \frac{E_2(0)+E^{2p-1}(0)+ \int_0^t h^{2p-1}(s) \ ds}{ \int_0^t \xi^{2p-1}(s) \ ds } \right)^{ \frac{1}{2p-1}}, 
\end{eqnarray}
where 
\begin{eqnarray}\label{3.38}
E_2(t)&=& \frac{1}{2}\left( \Vert A u(t) \Vert^2-g_0 \Vert A^{\frac{1}{2}}B^{\frac{1}{2}}u(t) \Vert^2
+ \Vert A^{\frac{1}{2}}u'(t)\Vert^2 \right) + \frac{1}{2} \int_0^{+\infty}g(s)\Vert A^{\frac{1}{2}}B^{\frac{1}{2}}\eta^t(s)  \Vert^2 \ ds. \nonumber\\
\end{eqnarray}
Moreover, if 
 \begin{eqnarray}\label{H1.53}
 \int_0^{+\infty} \left(  \frac{E_2(0)+E^{2p-1}(0)+ \int_0^t h^{2p-1}(s) \ ds}{ \int_0^t \xi^{2p-1}(s) \ ds } \right)^{ \frac{1}{2p-1}} < + \infty,
 \end{eqnarray}
then for all $t \in \mathbb{R}_+$,
 \begin{eqnarray}\label{H1.54}
 E(t) \leq C \left(  \frac{E_2(0)+E^{p}(0)+ \int_0^t h^{p}(s) \ ds}{ \int_0^t \xi^{p}(s) \ ds } \right)^{ \frac{1}{p}}.
 \end{eqnarray}
\end{enumerate}
\end{theorem}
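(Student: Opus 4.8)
The plan is to reduce each of the four estimates to the integral inequalities \eqref{IL11}--\eqref{IL12} (with suitable $\alpha$) by producing a pointwise differential inequality of the form $E'(t)\le -c_1\xi^{2p-1}(t)E^{2p-1}(t)+c_2 h^{2p-1}(t)$. I would first establish everything for regular data $\mathcal{U}_0\in\mathcal{D}(\mathcal{A})$, where \eqref{ID3.1} holds and $E$ is genuinely differentiable, and then pass to $\mathcal{U}_0\in\mathcal{H}$ by density and continuous dependence. The one structural input is \eqref{ID3.1} combined with $(\mathbb{H}_3)$, which gives $-E'(t)\ge\frac12\int_0^{+\infty}\xi(s)g^p(s)\Vert B^{\frac12}\eta^t(s)\Vert^2\,ds$; this is the sole place where the sign of $g'$ is used.

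Next I would build a Lyapunov functional $L=NE+\varepsilon_1\Phi+\varepsilon_2\Psi$ with the standard multipliers $\Phi(t)=\langle u_t,u\rangle$ and $\Psi(t)=-\langle u_t,\int_0^{+\infty}g(s)\eta^t(s)\,ds\rangle$. Differentiating along \eqref{I1.3}, rewriting $\int_0^{+\infty}g(s)Bu(t-s)\,ds=g_0Bu(t)-\int_0^{+\infty}g(s)B\eta^t(s)\,ds$ via \eqref{I1.1}, integrating by parts in $s$ (the boundary terms vanish since $\eta^t(0)=0$ and $\eta^t$ lies in the weighted space), and using $(\mathbb{H}_1)$--$(\mathbb{H}_2)$ to absorb cross terms, one fixes $N$ large and $\varepsilon_1,\varepsilon_2$ small so that $L\sim E$ and $L'(t)\le -mE(t)+C\,\mathcal{G}(t)$, where $\mathcal{G}(t)=\int_0^{+\infty}g(s)\Vert B^{\frac12}\eta^t(s)\Vert^2\,ds$. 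This step is routine and independent of $p$.

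The heart of the argument is the estimate of $\mathcal{G}$. Using $\eta^t(s)=u(t)-u(t-s)$, the coercivity in $(\mathbb{H}_2)$ (so that $\Vert A^{\frac12}u(t)\Vert,\Vert B^{\frac12}u(t)\Vert\le C$ from $E(t)\le E(0)$), and the history bound \eqref{ID3.3}, I would first prove $\Vert B^{\frac12}\eta^t(s)\Vert\le C$ uniformly in $s,t$. I then split $\mathcal{G}=\int_0^t+\int_t^{+\infty}$. On $[t,+\infty)$ the uniform bound gives $\int_t^{+\infty}g\Vert B^{\frac12}\eta^t\Vert^2\le C\int_t^{+\infty}g(s)\,ds$. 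On $[0,t]$, since $\xi$ is nonincreasing, $\xi(s)\ge\xi(t)$, whence $\int_0^t g^p\Vert B^{\frac12}\eta^t\Vert^2\le\xi(t)^{-1}(-2E'(t))$; applying Hölder with the splitting $g=(g^p)^{1/(2p-1)}(g^{1/2})^{(2p-2)/(2p-1)}$, the same $\xi(s)\ge\xi(t)$ device, and \eqref{IL2.1} with $\sigma=\frac12$ (legitimate precisely because $\frac12<2-p$, i.e.\ $p<\frac32$, giving $\int_0^{+\infty}\xi g^{1/2}<+\infty$) yields $\int_0^t g\Vert B^{\frac12}\eta^t\Vert^2\le C\xi(t)^{-1}(-E'(t))^{1/(2p-1)}$. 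Feeding these two bounds into $L'\le -mE+C\mathcal{G}$, multiplying by the nonincreasing weight $\xi^{2p-1}(t)E^{2p-2}(t)$, and applying Young's inequality twice --- once to convert $(-E')^{1/(2p-1)}$ into $\epsilon(-E')+C\xi^{2p-1}E^{2p-1}$, once to convert $\xi^{2p-2}E^{2p-2}h$ into $\epsilon\xi^{2p-1}E^{2p-1}+Ch^{2p-1}$ --- then absorbing the $\xi^{2p-1}E^{2p-1}$ terms into $-m\xi^{2p-1}E^{2p-1}$ and passing to the equivalent functional $\xi^{2p-1}E^{2p-2}L+cE$ (whose weight is nonincreasing, so the residual $\epsilon(-E')$ cancels), produces $E'(t)\le -c_1\xi^{2p-1}E^{2p-1}+c_2h^{2p-1}$. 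The first technical lemma with $\alpha=2p-2$ then gives \eqref{ID3.5}.

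Finally, \eqref{ID3.7} follows by a bootstrap: \eqref{ID3.6} says exactly that the right-hand side of \eqref{ID3.5} lies in $L^1(\mathbb{R}_+)$, i.e.\ $E\in L^1(\mathbb{R}_+)$, and re-running the memory estimate with this extra integrability lowers the interpolation exponent from $2p-1$ to $p$, so that $E'(t)\le -c_1\xi^{p}E^{p}+c_2h^{p}$ and \eqref{IL12} with $\alpha=p-1$ gives \eqref{ID3.7}. For part (2) the scheme is identical but carried out at the level of the second-order energy $E_2$ from \eqref{3.38}, for which $E_2'(t)=\frac12\int_0^{+\infty}g'(s)\Vert A^{\frac12}B^{\frac12}\eta^t(s)\Vert^2\,ds\le 0$: condition \eqref{H1.5} replaces \eqref{H1.4}, the uniform bound now comes from \eqref{H1.52} together with the monotonicity of $E_2$, and after reaching the same type of differential inequality one integrates it directly (dividing by $\int_0^t\xi^{2p-1}$) to obtain the quotient-form estimate, the bootstrap under \eqref{H1.53} yielding \eqref{H1.54}. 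I expect the main obstacle to be the $[0,t]$ memory estimate: choosing the Hölder exponents so that the residual weight is exactly $\xi g^{1/2}$ (the only power guaranteed integrable by \eqref{IL2.1} for all $p<\frac32$), and then balancing the two Young inequalities so that the surviving error is precisely $h^{2p-1}$ while the $\xi^{2p-1}E^{2p-1}$ terms remain absorbable --- this is exactly where the constraint $p<\frac32$ and the precise exponents appearing in \eqref{ID3.5} are forced.
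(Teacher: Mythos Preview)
Your proposal is correct and follows essentially the same route as the paper: the same Lyapunov functional (the paper imports it from \cite{A11} as $I_3=ME+I_1+\tfrac{g_0}{2}I_2+\alpha_0 E$), the same $[0,t]$ versus $[t,\infty)$ splitting of the memory term with the H\"older interpolation at $\sigma=\tfrac12$ on the near part (this is exactly the paper's Lemma~4/Corollary~5), the same multiplication by $\xi^{2p-2}E^{2p-2}$ followed by Young's inequality and Lemma~1, and the same bootstrap (the paper does it via Jensen's inequality with weight $\nu(t)=\int_0^t\Vert A^{1/2}\eta^t\Vert^2\,ds$, finite precisely because \eqref{ID3.6} forces $E\in L^1$) to improve the exponent from $2p-1$ to $p$. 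For Case~2 both you and the paper pass to the second-order energy $E_2$, reach the analogous inequality with $[-E_2'(t)]^{1/(2p-1)}$ on the right, and then integrate directly over $(0,t)$---using the monotonicity of $\xi$ and $E$ together with $I_3\sim E$---rather than invoking Lemma~1, which is what produces the quotient form of the estimate.
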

\begin{lemma}
Assume that g satisfies $(\mathbb{H}_2)$ and $(\mathbb{H}_3)$, and u is the solution of \eqref{I1.1}-\eqref{I1.2}. 
Then, for any \  $ 0 < \sigma < 1$ and any \  $t>0$, we have
\begin{eqnarray}\label{IL3.2}
\int_0^{t} g(s) \Vert B^{\frac{1}{2}} \eta^t(s)\Vert^2 \ ds \leq 
C \left[  E(0) \int_0^{t} g^{1-\sigma}(s) \ ds  \right]^{\frac{p-1}{p-1+\sigma} } 
\left[ \int_0^{t} g^p(s)  \Vert B^{\frac{1}{2}} \eta^t(s) \Vert^2 \ ds \right]^{\frac{\sigma}{p-1+\sigma}},\nonumber\\
\end{eqnarray}
and for $ \sigma= \frac{1}{2}$, we have
\begin{eqnarray}\label{IL3.3}
\int_0^{t} g(s) \Vert B^{\frac{1}{2}} \eta^t(s)\Vert^2 \ ds \leq
C \left[  E(0) \int_0^{t} g^{\frac{1}{2}}(s) \ ds  \right]^{\frac{2p-2}{p-1} } 
\left[ \int_0^{t} g^p(s)  \Vert B^{\frac{1}{2}} \eta^t(s) \Vert^2 \ ds \right]^{\frac{1}{2p-1}}.
\end{eqnarray}
\end{lemma}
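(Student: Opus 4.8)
The plan is to derive \eqref{IL3.2} from H\"older's inequality applied to a carefully chosen factorization of the integrand $g\,\Vert B^{\frac12}\eta^t\Vert^2$, after first establishing a uniform (in $s$) bound on $\Vert B^{\frac12}\eta^t(s)\Vert^2$ in terms of the initial energy $E(0)$. The estimate \eqref{IL3.3} will then be the special case $\sigma=\frac12$.

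First I would obtain the pointwise estimate $\Vert B^{\frac12}\eta^t(s)\Vert^2\leq C\,E(0)$ valid for all $s\in[0,t]$. Since $\eta^t(s)=u(t)-u(t-s)$, the triangle inequality gives $\Vert B^{\frac12}\eta^t(s)\Vert^2\leq 2\Vert B^{\frac12}u(t)\Vert^2+2\Vert B^{\frac12}u(t-s)\Vert^2$. The key point is that the ``elastic'' part of the energy is coercive: combining $(\mathbb{H}_1)$, which yields $\Vert A^{\frac12}v\Vert^2\geq \frac{1}{a_0}\Vert B^{\frac12}v\Vert^2$, with $(\mathbb{H}_2)$, namely $g_0<\frac{1}{a_0}$, one finds
\begin{equation*}
\Vert A^{\frac12}v\Vert^2-g_0\Vert B^{\frac12}v\Vert^2\geq \Big(\tfrac{1}{a_0}-g_0\Big)\Vert B^{\frac12}v\Vert^2 .
\end{equation*}
Inspecting the definition \eqref{ID3.2} of $E$ and discarding the nonnegative terms $\Vert u_t\Vert^2$ and the memory integral then gives $\Vert B^{\frac12}u(\tau)\Vert^2\leq C\,E(\tau)$. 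Because \eqref{ID3.1} together with $g'\leq 0$ forces $E$ to be nonincreasing, $E(\tau)\leq E(0)$ for every $\tau\in[0,t]$; and since for $s\in[0,t]$ both arguments $t$ and $t-s$ are nonnegative, the desired bound $\Vert B^{\frac12}\eta^t(s)\Vert^2\leq C\,E(0)$ follows.

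Next I would interpolate. Writing the integrand as the product
\begin{equation*}
g\,\Vert B^{\frac12}\eta^t\Vert^2 = \Big(g^{\frac{(1-\sigma)(p-1)}{p-1+\sigma}}\Vert B^{\frac12}\eta^t\Vert^{\frac{2(p-1)}{p-1+\sigma}}\Big)\cdot\Big(g^{\frac{p\sigma}{p-1+\sigma}}\Vert B^{\frac12}\eta^t\Vert^{\frac{2\sigma}{p-1+\sigma}}\Big),
\end{equation*}
I apply H\"older's inequality with the conjugate exponents $q=\frac{p-1+\sigma}{p-1}$ and $q'=\frac{p-1+\sigma}{\sigma}$ (for $1<p<\frac32$ these satisfy $q,q'>1$ and $\frac1q+\frac1{q'}=1$). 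A direct check shows that the first factor raised to the power $q$ equals $g^{1-\sigma}\Vert B^{\frac12}\eta^t\Vert^2$, while the second raised to $q'$ equals $g^{p}\Vert B^{\frac12}\eta^t\Vert^2$, so that
\begin{equation*}
\int_0^t g\,\Vert B^{\frac12}\eta^t\Vert^2\,ds \leq \Big(\int_0^t g^{1-\sigma}\Vert B^{\frac12}\eta^t\Vert^2\,ds\Big)^{\frac{p-1}{p-1+\sigma}}\Big(\int_0^t g^{p}\Vert B^{\frac12}\eta^t\Vert^2\,ds\Big)^{\frac{\sigma}{p-1+\sigma}}.
\end{equation*}
Inserting the uniform bound $\Vert B^{\frac12}\eta^t\Vert^2\leq C\,E(0)$ into the first integral gives $\int_0^t g^{1-\sigma}\Vert B^{\frac12}\eta^t\Vert^2\,ds\leq C\,E(0)\int_0^t g^{1-\sigma}\,ds$, and \eqref{IL3.2} follows after absorbing constants into $C$.

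Finally, \eqref{IL3.3} is the substitution $\sigma=\frac12$: then $p-1+\sigma=\frac{2p-1}{2}$, so $g^{1-\sigma}=g^{1/2}$, the first exponent becomes $\frac{p-1}{p-1+\sigma}=\frac{2p-2}{2p-1}$, and the second becomes $\frac{\sigma}{p-1+\sigma}=\frac{1}{2p-1}$, which recovers \eqref{IL3.3} (the exponent printed there as $\frac{2p-2}{p-1}$ should read $\frac{2p-2}{2p-1}$, since the two exponents must sum to $1$). I expect the main obstacle to be the uniform energy bound rather than the interpolation itself: one must verify that the coercivity constant $\frac{1}{a_0}-g_0$ is positive, which is exactly what $(\mathbb{H}_2)$ supplies, and that the monotonicity of $E$ combined with the restriction $s\in[0,t]$ keeps both time arguments $t$ and $t-s$ in the range where the energy identity \eqref{ID3.1} applies. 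Note also that the argument genuinely uses $p>1$ so that $q,q'$ are finite; the borderline case $p=1$ corresponds to exponential decay and is treated separately.
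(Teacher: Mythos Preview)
Your proof is correct and follows essentially the same route as the paper: the same H\"older factorization with conjugate exponents $\frac{p-1+\sigma}{p-1}$ and $\frac{p-1+\sigma}{\sigma}$, followed by the uniform bound $\Vert B^{1/2}\eta^t(s)\Vert^2\leq C\,E(0)$ obtained via $(\mathbb{H}_1)$, $(\mathbb{H}_2)$ and the monotonicity of $E$. You also correctly flag the misprint in \eqref{IL3.3}: the first exponent should indeed be $\frac{2p-2}{2p-1}$ rather than $\frac{2p-2}{p-1}$.
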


\begin{proof}
By making use of H\"older's inequality, we obtain
\begin{eqnarray}\label{IL3.5}
\int_0^{t} g(s) \Vert B^{\frac{1}{2}} \eta^t(s)\Vert^2 \ ds
&=& \int_0^{t} g^{1-\frac{\sigma p}{p-1+\sigma}} (s)
\Vert B^{\frac{1}{2}} \eta^t(s) \Vert^{2-\frac{2\sigma}{p-1+\sigma}} \
g(s)^{\frac{p\sigma}{p-1+\sigma}}   \Vert B^{\frac{1}{2}} \eta^t(s) \Vert^{\frac{2\sigma}{p-1+\sigma}}
\ ds \nonumber\\
&\leq& \left(  \int_0^{t} g^{1-\sigma}(s) \Vert B^{\frac{1}{2}} \eta^t(s) \Vert^2  \ ds\right)^{
\frac{p-1}{p-1+\sigma} } \ 
\left( \int_0^{t} g^p(s)   \Vert B^{\frac{1}{2}} \eta^t(s) \Vert^2 \ ds \right)^{
\frac{\sigma}{p-1+\sigma}} .\nonumber\\
\end{eqnarray}

Using $ (\mathbb{H}_1)$, $ (\mathbb{H}_2)$ and the the definition of $E$, we then find that
\begin{eqnarray}\label{ID3.14}
E(t) \geq \frac{1-a_0g_0}{2} \left( \Vert A^{\frac{1}{2}}u(t) \Vert^2 +  \Vert u'(t) \Vert^2+\int_0^{+\infty} g(s) \Vert B^{\frac{1}{2}} \eta^t(s) \ ds  \right), \qquad \forall t\in \mathbb{R}_+. 
\end{eqnarray}

Therefore, from \eqref{IL3.5} and \eqref{ID3.14}, we deduce that
\begin{eqnarray}\label{IL3.6}
\int_0^{t} g(s)^{1-\sigma} \Vert B^{\frac{1}{2}} \eta^t(s) \Vert^2 \ ds 
&\leq&  \int_0^{t} g^{1-\sigma}(s) \Vert  B^{\frac{1}{2}} u(t) + B^{\frac{1}{2}}u(t-s) \Vert^2 \ ds 
\nonumber\\
&\leq& 2 \Vert  B^{\frac{1}{2}} u(t) \Vert^2 \int_0^{+\infty} g^{1-\sigma}(s) \ ds
+ 2 \int_0^t  g^{1-\sigma}(s) \Vert B^{\frac{1}{2}} u(t-s) \Vert^2 \ ds \nonumber \\
 &\leq& 2 a_0 \Vert  A^{\frac{1}{2}} u(t) \Vert^2 \int_0^{+\infty} g^{1-\sigma}(s) \ ds
+ 2 a_0 \int_0^t  g^{1-\sigma}(s) \Vert A^{\frac{1}{2}} u(t-s) \Vert^2 \ ds \nonumber \\
&\leq& C E(0) \int_0^{+\infty} g^{1-\sigma}(s) ds.  
\end{eqnarray} 
Finally, by inserting \eqref{IL3.6} in  \eqref{IL3.5}, we get \eqref{IL3.2}. Inequality \eqref{IL3.3} is simply a particular case of \eqref{IL3.2}.
\end{proof}

\begin{corollary}
Assume that g satisfies $(\mathbb{H}_2)$ and $(\mathbb{H}_3)$, and u is the solution of \eqref{I1.1}-\eqref{I1.2}. Then, for all $t\in \mathbb{R}_+$,
\begin{eqnarray}\label{ID3.9}
\xi(t) \int_0^t g(s)  \Vert B^{\frac{1}{2}} \eta^t(s) \Vert^2 \ ds  \leq C \  [-E'(t) ]^{\frac{1}{2p-1}}.
\end{eqnarray}
\end{corollary}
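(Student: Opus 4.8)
The plan is to combine the interpolation inequality \eqref{IL3.3} with the energy dissipation identity \eqref{ID3.1}, using $(\mathbb{H}_3)$ to turn the dissipation into a lower bound for the weighted quantity $\int_0^t g^p(s)\Vert B^{\frac{1}{2}}\eta^t(s)\Vert^2\,ds$. The key observation is that \eqref{IL3.3} already isolates precisely this quantity raised to the power $\frac{1}{2p-1}$, which is exactly the exponent appearing on the right-hand side of the claim. Consequently the whole argument reduces to inserting the dissipation bound, tracking the resulting powers of $\xi$, and checking that the leftover factor is bounded uniformly in $t$.

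First I would extract the dissipation bound. Starting from \eqref{ID3.1}, using $-g'(s)\geq \xi(s)g^p(s)$ from \eqref{H1.6}, discarding the nonnegative tail over $(t,+\infty)$, and invoking that $\xi$ is nonincreasing so that $\xi(s)\geq\xi(t)$ on $[0,t]$, one obtains
\begin{eqnarray*}
-E'(t)\;\geq\;\frac{1}{2}\int_0^{t}\xi(s)g^p(s)\Vert B^{\frac{1}{2}}\eta^t(s)\Vert^2\,ds\;\geq\;\frac{\xi(t)}{2}\int_0^{t}g^p(s)\Vert B^{\frac{1}{2}}\eta^t(s)\Vert^2\,ds,
\end{eqnarray*}
hence $\int_0^{t}g^p(s)\Vert B^{\frac{1}{2}}\eta^t(s)\Vert^2\,ds\leq \frac{2}{\xi(t)}\,(-E'(t))$. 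Inserting this into \eqref{IL3.3} (taken with $\sigma=\frac{1}{2}$) and multiplying through by $\xi(t)$, the factors of $\xi$ combine as $\xi(t)\cdot\xi(t)^{-\frac{1}{2p-1}}=\xi(t)^{\frac{2p-2}{2p-1}}$, which I would distribute into the first bracket. This produces an estimate of the shape
\begin{eqnarray*}
\xi(t)\int_0^{t}g(s)\Vert B^{\frac{1}{2}}\eta^t(s)\Vert^2\,ds\;\leq\;C\left[E(0)\,\xi(t)\int_0^{t}g^{\frac{1}{2}}(s)\,ds\right]^{\frac{2p-2}{2p-1}}\big[-E'(t)\big]^{\frac{1}{2p-1}}.
\end{eqnarray*}

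The one step requiring genuine care is bounding the bracket $\xi(t)\int_0^{t}g^{\frac{1}{2}}(s)\,ds$ independently of $t$. Here I would again use that $\xi$ is nonincreasing, so that $\xi(t)\leq\xi(s)$ for $s\leq t$, giving $\xi(t)\int_0^{t}g^{\frac{1}{2}}(s)\,ds\leq\int_0^{t}\xi(s)g^{\frac{1}{2}}(s)\,ds\leq\int_0^{+\infty}\xi(s)g^{\frac{1}{2}}(s)\,ds$; the last integral is finite by \eqref{IL2.1} applied with $\sigma=\frac{1}{2}$, which is admissible precisely because $p<\frac{3}{2}$ forces $\frac{1}{2}<2-p$. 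Since $E(0)$ is a fixed constant, both it and this uniform bound are absorbed into $C$, which yields \eqref{ID3.9}. The main obstacle is therefore not any individual estimate but the bookkeeping of the powers of $\xi$, arranged so that exactly the combination $\xi(t)\int_0^t g^{\frac{1}{2}}$ controlled by \eqref{IL2.1} is what survives.
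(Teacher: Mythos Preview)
Your argument is correct and follows essentially the same route as the paper: both proofs apply the interpolation estimate \eqref{IL3.3} with $\sigma=\tfrac12$, pull the factor $\xi(t)$ inside using the monotonicity of $\xi$, invoke \eqref{IL2.1} to absorb $\int_0^t \xi(s)g^{1/2}(s)\,ds$ into the constant, and use $(\mathbb{H}_3)$ together with \eqref{ID3.1} to bound $\int_0^t \xi(s)g^p(s)\Vert B^{1/2}\eta^t(s)\Vert^2\,ds$ by $-2E'(t)$. The only difference is the order in which these steps are carried out; the paper distributes $\xi(t)=\xi(t)^{\frac{2p-2}{2p-1}}\xi(t)^{\frac{1}{2p-1}}$ over the two brackets of \eqref{IL3.3} at the outset, whereas you first isolate the dissipation bound and then multiply by $\xi(t)$, but the computation is the same.
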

\begin{proof}
Using  \eqref{IL3.3},  {Lemma2} (for $\sigma=\frac{1}{2}$) and  Young's inequality, we obtain
\begin{eqnarray}\label{ID3.10}
\xi(t) \int_0^t g(s) \Vert B^{\frac{1}{2}} \eta^t(s)\Vert^2 \ ds 
&\leq& C \xi(t)^{\frac{2p-2}{2p-1}}(t)  \left[  \int_0^t g^{\frac{1}{2}}(s) \ ds \right]^{\frac{2p-2}{2p-1}} 
\xi^{\frac{1}{2p-1}}(t)  \left[  \int_0^{ t} g^p(s) \Vert B^{\frac{1}{2}}\eta^t(s) \Vert^2 \ ds  \right]^{\frac{1}{2p-1}}
\nonumber\\
&\leq& C \left[ \int_0^t \xi(s) g^{\frac{1}{2}}(s) \ ds  \right]^{\frac{2p-2}{2p-1}}  
\left[  \int_0^{ t}  \xi(s) g^p(s) \Vert B^{\frac{1}{2}}\eta^t(s) \Vert^2 \ ds  \right]^{\frac{1}{2p-1}}  \nonumber \\
&\leq& C \left[  -\int_0^{ t}  g'(s) \Vert B^{\frac{1}{2}}\eta^t(s) \Vert^2 \ ds  \right]^{\frac{1}{2p-1}} \nonumber \\
&\leq& C [- E'(t) ]^{\frac{1}{2p-1}}.
\end{eqnarray}
\end{proof}
For the following Lemma, we adopt the result from \cite{A11} without proof.
\begin{lemma}
There exist positive constants  $M,\alpha_0, \alpha_1, \alpha_2$ such that the functional 
\begin{eqnarray}\label{ID3.11}
I_3(t)&:=&M E+ I_1+ \frac{g_0}{2}I_2+ \alpha_0 E
\end{eqnarray} 
is equivalent to $ E $ and satisfies, for all $ t \in \mathbb{R}_+$,
\begin{eqnarray}\label{ID3.12}
I_3'(t) \leq -\alpha_1 E(t) +\alpha_{2} \left( \int_0^{+\infty} g(s) \Vert A^{\frac{1}{2}} \eta^t(s) \Vert^2 \ ds
+ \int_0^{+\infty} g(s) \Vert B^{\frac{1}{2}} \eta^t(s) \Vert^2 \ ds \right),
\end{eqnarray}
where the functionals $I_1, I_2$ are given by  
\begin{eqnarray*}
I_1(t):=\langle u_t(t),u(t)\rangle,
\end{eqnarray*}
and 
\begin{eqnarray*}
I_2(t):=-\langle u_t(t), \int_0^{+\infty} g(s) \eta^t(s) \ ds\rangle.
\end{eqnarray*}
\end{lemma}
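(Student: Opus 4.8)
The plan is to run the classical multiplier/Lyapunov program for viscoelastic systems: build the two auxiliary functionals $I_1,I_2$, differentiate them along the flow, estimate every cross term by Young's inequality, and then tune $M,\alpha_0$ and the weight $\tfrac{g_0}{2}$ so that the combination is simultaneously equivalent to $E$ and strictly dissipative. First I would rewrite \eqref{I1.1} in the form $u_{tt}+Au-g_0Bu+\int_0^{+\infty}g(s)B\eta^t(s)\,ds=0$, which follows from $u(t-s)=u(t)-\eta^t(s)$, so that $u_{tt}$ can be substituted directly. Differentiating $I_1(t)=\langle u_t,u\rangle$ and using this identity gives
\[ I_1'(t)=\Vert u_t\Vert^2-\Vert A^{\frac12}u\Vert^2+g_0\Vert B^{\frac12}u\Vert^2-\int_0^{+\infty}g(s)\langle B^{\frac12}\eta^t(s),B^{\frac12}u\rangle\,ds, \]
and the last (cross) term is handled by Young's inequality, producing $\delta\Vert B^{\frac12}u\Vert^2$ (absorbed, via $(\mathbb{H}_1)$, into a small multiple of $\Vert A^{\frac12}u\Vert^2$) plus a constant times $\int_0^{+\infty}g(s)\Vert B^{\frac12}\eta^t(s)\Vert^2\,ds$, which is one of the two terms permitted on the right of \eqref{ID3.12}.

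Next I would differentiate $I_2(t)=-\langle u_t,\int_0^{+\infty}g(s)\eta^t(s)\,ds\rangle$. Here the transport relation $\partial_t\eta^t=u_t-\partial_s\eta^t$ for the relative history (read off from the third component of $\mathcal{A}$ in \eqref{I1.3}), together with an integration by parts in $s$ using $\eta^t(0)=0$ and the decay of $g(s)\eta^t(s)$ as $s\to+\infty$, yields the crucial dissipative term $-g_0\Vert u_t\Vert^2$ and a remainder $-\langle u_t,\int_0^{+\infty}g'(s)\eta^t(s)\,ds\rangle$. Substituting $u_{tt}$ once more, the term $-\langle u_{tt},\int g\eta\rangle$ splits into inner products of $A^{\frac12}u$ and $B^{\frac12}u$ against $\int gA^{\frac12}\eta$ and $\int gB^{\frac12}\eta$, plus $\Vert B^{\frac12}\!\int g\eta\Vert^2$. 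Young's inequality turns these into small multiples of $\Vert A^{\frac12}u\Vert^2,\Vert B^{\frac12}u\Vert^2$ together with constant multiples of $\int g\Vert A^{\frac12}\eta^t\Vert^2$ and $\int g\Vert B^{\frac12}\eta^t\Vert^2$, precisely the two terms allowed on the right of \eqref{ID3.12}, while the $g'$-remainder is bounded, through $(\mathbb{H}_1)$ and Cauchy--Schwarz, by $\delta\Vert u_t\Vert^2$ plus a multiple of $\int_0^{+\infty}(-g'(s))\Vert B^{\frac12}\eta^t(s)\Vert^2\,ds=-2E'(t)$, using \eqref{ID3.1}.

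Forming $I_1+\tfrac{g_0}{2}I_2$ with the Young parameters taken small, the coefficients of $\Vert A^{\frac12}u\Vert^2$ and $\Vert u_t\Vert^2$ should become strictly negative; I would then invoke the lower bound \eqref{ID3.14} to convert $-c(\Vert A^{\frac12}u\Vert^2+\Vert u_t\Vert^2)$ into $-\alpha_1E$, at the cost of pushing the leftover $\int g\Vert B^{\frac12}\eta^t\Vert^2$ to the right-hand side where it merges with the terms already generated. Adding $ME$ with $M$ large absorbs the $g'$-remainder through $ME'(t)=\tfrac{M}{2}\int g'\Vert B^{\frac12}\eta^t\Vert^2\,ds$, which for $M$ big enough dominates the $\int(-g')\Vert B^{\frac12}\eta^t\Vert^2$ contributions above. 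I expect the main obstacle to be exactly this constant bookkeeping: since $I_1'$ contributes a \emph{positive} $\Vert u_t\Vert^2$ and only $I_2'$ supplies negativity in $\Vert u_t\Vert^2$, one must check that the weight on $I_2$ together with the smallness of the Young parameters really leaves a net negative $\Vert u_t\Vert^2$ coefficient, and that $M$ can be fixed large enough to kill the $g'$-weighted history terms without spoiling the other signs.

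Finally, the equivalence $I_3\sim E$ is the routine part. From $|\langle u_t,u\rangle|\le\tfrac12\Vert u_t\Vert^2+\tfrac12\Vert u\Vert^2$ and $|I_2|\le\tfrac12\Vert u_t\Vert^2+\tfrac{g_0}{2}\int g\Vert\eta^t\Vert^2\,ds$, combined with $(\mathbb{H}_1)$ (which gives $\Vert u\Vert^2\le\tfrac{a_0}{a_1}\Vert A^{\frac12}u\Vert^2$ and $\Vert\eta^t\Vert^2\le\tfrac1{a_1}\Vert B^{\frac12}\eta^t\Vert^2$) and \eqref{ID3.14}, one obtains $|I_1+\tfrac{g_0}{2}I_2|\le C_0E$. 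Hence $\big((M+\alpha_0)-C_0\big)E\le I_3\le\big((M+\alpha_0)+C_0\big)E$, so choosing $M+\alpha_0>C_0$ makes $I_3$ positive and equivalent to $E$, which completes the construction.
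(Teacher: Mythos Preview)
Your proposal is correct and follows exactly the multiplier/Lyapunov route that the paper adopts from \cite{A11}; in fact the paper does not supply its own argument for this lemma but simply cites that reference, so your sketch is already more detailed than what appears in the text. Your identification of the sign--bookkeeping on the $\Vert u_t\Vert^2$ coefficient as the delicate step is accurate and is precisely where the weight on $I_2$ and the choice of $M$ come into play in \cite{A11}.
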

\begin{proof}
See \cite{A11}
\end{proof}
\begin{proof}(\textbf{Theorem3})

\textbf{Case1: \eqref{H1.4} holds}. \\
 We have 
\begin{eqnarray}\label{ID3.17}
\int_0^{+\infty} g(s) \Vert A^{\frac{1}{2}} \eta^t(s) \Vert^2 \ ds 
\leq a_2  \int_0^{+\infty} g(s) \Vert B^{\frac{1}{2}} \eta^t(s) \Vert^2 \ ds .
\end{eqnarray}
 It then follows  from \eqref{ID3.17} and \eqref{ID3.12}, that, for some positive constant $C$, we get 
\begin{eqnarray}\label{ID3.201}
 I_3'(t) &\leq& -\alpha_1 E(t) + C   \int_0^{+\infty} g(s)\Vert B^{\frac{1}{2}} \eta^t(s) \Vert^2 \ ds.
\end{eqnarray}
Using Corollary 5, we multiply  the estimate \eqref{ID3.201} by  $ \xi(t)$ to arrive at
\begin{eqnarray}\label{ID3.13}
\xi(t) I_3'(t) &\leq& -\alpha_1 \xi(t) E(t) + C \xi(t) \int_0^{+\infty} g(s)\Vert B^{\frac{1}{2}} \eta^t(s) \Vert^2 \ ds.
\end{eqnarray}

Now, from $(\mathbb{H}_1)$ and \eqref{ID3.14} one can see that for all $ s>t $, 
\begin{eqnarray}\label{ID3.15}
\Vert B^{\frac{1}{2}} \eta^t(s) \Vert^2  & \leq& 2a_0 \Vert A^{\frac{1}{2}} u(t) \Vert^2 
+ 2 \Vert B^{\frac{1}{2}} u(t-s) \Vert^2 \nonumber \\
&\leq& \frac{4a_0}{1-a_0g_0} E(0) +  2 \sup_{\tau<0} \Vert B^{\frac{1}{2}} u(\tau) \Vert^2
\nonumber\\
&\leq& \frac{4a_0}{1-a_0g_0} E(0) + 2 m_0^2.
\end{eqnarray}
This leads  to
\begin{eqnarray}\label{ID3.16}
\xi(t) \int_t^{+\infty} g(s) \Vert B^{\frac{1}{2}}  \eta^t(s) \ ds \Vert^2 \ ds 
\leq \left( \frac{4}{1-g_0} E(0) + 2 m_0^2 \right) \xi(t) \int_t^{+\infty} g(s) \ ds :=C h(t).
\end{eqnarray}

From Corollary 5, and \eqref{ID3.16}, the inequality  \eqref{ID3.13} takes the form
\begin{eqnarray}\label{ID3.18}
\xi(t) I_3'(t) &\leq& -\alpha_1 \xi(t) E(t) + C [-E'(t)]^{\frac{1}{2p-1}} + C h(t).
\end{eqnarray}

Multiplying  the last inequality by  $ \xi^\alpha E^\alpha$, where $ \alpha = 2p-2 > 0$, we find
\begin{eqnarray}\label{ID3.19}
\xi(t)^{\alpha+1} E^\alpha(t) I_3'(t) \leq -\alpha_1 \xi^{\alpha+1}(t) E^{\alpha+1}(t)+ C \xi^\alpha(t)E^\alpha(t)[-E'(t)]^{\frac{1}{2p-1}} + C h(t)\xi^\alpha(t)E^\alpha(t) .\nonumber\\
\end{eqnarray}

Exploiting Young's inequality, we get for any $ \epsilon >0$,  
\begin{eqnarray}\label{ID3.20}
\xi^{\alpha+1}(t) E^\alpha(t) I_3'(t) &\leq& -\alpha_1 \xi^{\alpha+1} E^{\alpha+1}(t) - C_\epsilon E'(t)+  2 \epsilon \xi^{\alpha+1}(t)E^{\alpha+1}(t)+ C_\epsilon h^{\alpha+1}(t).\nonumber \\
\end{eqnarray}

Next, let $ F(t) := \xi^{\alpha+1}(t) E^\alpha(t) I_3(t) + C_\epsilon E(t)\sim E(t) $. Then,  for $ \epsilon$ small enough, there exists a positive constant $ \tilde{\alpha}_1 $ such that
\begin{eqnarray}\label{ID3.21}
F'(t) \leq -  \tilde{\alpha}_1 \xi^{\alpha+1}(t) F^{\alpha+1}(t) + C h^{\alpha+1}(t).
\end{eqnarray}

In view of Lemma 1 and taking into account that   $ F \sim E$, we get
\begin{eqnarray}\label{ID3.22}
E(t) \leq C (1+t)^{-\frac{1}{2p-2}} \xi^{-\frac{2p-1}{2p-2}}(t)\left[ 1+ \int_0^t (s+1)^{\frac{1}{2p-2}} \xi^{\frac{2p-1}{2p-2}} (s) h^{2p-1}(s)  \ ds 
 \right]. 
\end{eqnarray} 

To get \eqref{ID3.7}, again we use  estimate \eqref{ID3.201}
\begin{eqnarray*}
\xi(t) I_3'(t) &\leq& -\alpha_1 \xi(t) E(t) + \alpha_2 \xi(t) \int_0^{t} g(s)\Vert A^{\frac{1}{2}} \eta^t(s) \Vert^2 \ ds
 + \alpha_2 \xi(t) \int_t^{+\infty} g(s)\Vert A^{\frac{1}{2}} \eta^t(s) \Vert^2 \ ds.
 \end{eqnarray*}
Applying Jensen's inequality, the estimate \eqref{ID3.6} and the fact that $ \xi $ is non-increasing, we find
\begin{eqnarray}\label{ID3.23}
  \xi(t) \int_0^{t} g(s)\Vert A^{\frac{1}{2}} \eta^t(s) \Vert^2 \ ds 
  &\leq& \frac{\nu(t)}{\nu(t)} \int_0^t [\xi^p(s)g^p(s)]^{\frac{1}{p}} \Vert A^{\frac{1}{2}} \eta^t(s) \Vert^2 \ ds
  \nonumber\\
  &\leq& C \nu(t)\left[ \frac{1}{\nu(t)} \int_0^t \xi^p(s)g^p(s) \Vert A^{\frac{1}{2}} \eta^t(s) \Vert^2 \ ds
 \right]^{\frac{1}{p}} \nonumber \\
 &=& c \nu^{1-\frac{1}{p}}(t)  \left[ \int_0^t \xi^p(s)g^p(s) \Vert A^{\frac{1}{2}} \eta^t(s) \Vert^2 \ ds
 \right]^{\frac{1}{p}},
\end{eqnarray}
where
\begin{eqnarray}\label{ID3.24}
\nu(t) &:=& \int_0^t \Vert A^{\frac{1}{2}}\eta^t(s) \Vert^2 \ ds \leq C \int _0^t \left(  \Vert A^{\frac{1}{2}} u(t) \Vert^2 + \Vert A^{\frac{1}{2}} u(t-s) \Vert^2 \right) \ ds \nonumber \\
&\leq& C \int_0^t [ E(t)+E(t-s) ] \ ds \nonumber \\
&\leq& 2C \int_0^t E(t-s)) \ ds \nonumber \\
&\leq & 2C \int_0^t E(s) \  ds  < 2C \int_0^\infty
E(s) \ ds < \infty,
\end{eqnarray}
and  with the assumption  that $ \nu(t)>0$.\\
Using \eqref{ID3.23} and \eqref{H1.4}, the assumption  $ (\mathbb{H}_3)$ and  the fact that $ \xi $ is non-increasing, we get
\begin{eqnarray}\label{ID3.26}
\xi(t) \int_0^{t} g(s)\Vert A^{\frac{1}{2}} \eta^t(s) \Vert^2 \ ds 
  &\leq& C \nu^{\frac{p-1}{p}}(t) \xi^{p-1}(0)\left[  \int_0^t \xi(s)g^p(s) \Vert B^{\frac{1}{2}} \eta^t(s) \Vert^2 \ ds  \right]^{\frac{1}{p}} \nonumber\\
  &\leq& C \left[  \int_0^t -g'(s) \Vert B^{\frac{1}{2}} \eta^t(s) \Vert^2 \ ds  \right]^{\frac{1}{p}} \nonumber\\
  &\leq& C [-E'(t)]^{\frac{1}{p}}.
 \end{eqnarray} 
 Thus, from \eqref{ID3.16} and \eqref{ID3.26}, it yields that
\begin{eqnarray}\label{ID3.27}
\xi(t) I_3'(t) &\leq& -\alpha_1 \xi(t) E(t) 
+C [-E'(t)]^{\frac{1}{p}} + C h(t).
\end{eqnarray}
We multiply \eqref{ID3.27} by $ \xi^\alpha (t) E^\alpha(t) $, for $ \alpha = p-1 $. This yields
\begin{eqnarray}\label{ID3.28}
\xi^{\alpha+1}(t)E^{\alpha}(t) I_3'(t) +C E'(t) \leq -\beta_1 \xi^{\alpha+1}(t)E^{\alpha+1}(t)+
\beta_2 h^{\alpha+1}(t).
\end{eqnarray} 
Now, let  $  \tilde{F}(t) := \xi^{\alpha+1}(t) E^\alpha(t) I_3(t) + C_\epsilon E(t)\sim E(t) $, then we have
\begin{eqnarray}\label{ID3.29}
\tilde{F}(t) \leq -\beta_1 \xi^{\alpha+1}(t)\tilde{F}^{\alpha+1}(t)+\beta_2 h^{\alpha+1}(t).
\end{eqnarray}
 Then {Lemma1} implies that
\begin{eqnarray}\label{ID3.30}
\tilde{F}(t) \leq C (1+t)^{-\frac{1}{p-1}} \ \xi^{-\frac{p}{p-1}}(t)\left[ 1+  \int_0^t (s+1)^{\frac{1}{p-1}} \xi^{\frac{p}{p-1}} (s) \ h^{p}(s)  \ ds 
 \right]. 
\end{eqnarray}
 Hence, we infer  that
\begin{eqnarray}\label{ID3.31}
E(t)\leq C (1+t)^{-\frac{1}{p-1}} \ \xi^{-\frac{p}{p-1}}(t)\left[ 1+ \int_0^t (s+1)^{\frac{1}{p-1}} \xi^{\frac{p}{p-1}} (s) \ h^{p}(s)  \ ds 
 \right], \quad  \forall t>0.  
\end{eqnarray}
\textbf{Case 2:   \eqref{H1.5} holds}. \\
As in \cite{A11}, and similar to the approach of \cite{Assymp07}, we  recall that the energy $E_2$  related with problem \eqref{I1.1}-\eqref{I1.2} and associated with $\mathcal{A}^{\frac{1}{2}}\mathcal{U}$ corresponding to \ $\mathcal{U}_0 \in\left(  \mathcal{D}(A) \times \mathcal{D}(A^{\frac{1}{2}})\times L^2_g( \mathbb{R}^+, \mathcal{D}(A^{\frac{1}{2}}B^{\frac{1}{2}}))\right)$  defined on
$ \mathbb{R}^+ $  by
\begin{eqnarray*}
E_2(t)&=&\Vert \mathcal{A}^{\frac{1}{2}} \mathcal{U}\Vert^2_{\mathcal{H}} \nonumber\\
&=& \frac{1}{2}\left( \Vert A u(t) \Vert^2-g_0 \Vert A^{\frac{1}{2}}B^{\frac{1}{2}}u(t) \Vert^2
+ \Vert A^{\frac{1}{2}}u'(t)\Vert^2 \right) + \frac{1}{2} \int_0^{+\infty}g(s)\Vert A^{\frac{1}{2}}B^{\frac{1}{2}}\eta^t(s)  \Vert^2 \ ds, \nonumber\\
\end{eqnarray*}
with 
\begin{eqnarray}\label{3.39}
E_2'(t)= \frac{1}{2}\int_0^{+\infty} g'(s) \Vert A^{\frac{1}{2}}B^{\frac{1}{2}}\eta^t(s) \ ds  \leq 0,\qquad \forall
t \in \mathbb{R}_+.
\end{eqnarray}
We observe that in view of assumption $(\mathbb{H}_1)$ 
\begin{eqnarray}\label{H3.40}
\Vert A^{\frac{1}{2}}B^{\frac{1}{2}} v \Vert^2 \leq a_0 \Vert A v  \Vert^2, \qquad \forall v \in \mathcal{D}(A^{\frac{1}{2}}B^{\frac{1}{2}}).
\end{eqnarray}
Multiplying \eqref{ID3.12} by $\xi(t)$, using \eqref{H1.5} and \eqref{H3.40}, we get 
\begin{eqnarray}\label{H3.41}
\xi(t) I_3'(t) &\leq& -\alpha_1 \xi(t) E(t) + \alpha_2 a_2(1+ a_0)\xi(t) \int_0^{+\infty} g(s)\Vert A^{\frac{1}{2}} B^{\frac{1}{2}} \eta^t(s) \Vert^2 \ ds.
\end{eqnarray}
Thanks to $ (\mathbb{H}_2)$, we get
\begin{eqnarray}\label{H3.42}
E_2(t) \geq  \frac{1-g_0 a_0}{2}\left( \Vert A u(t) \Vert^2
+ \Vert A^{\frac{1}{2}}u'(t)\Vert^2 + \int_0^{+\infty}g(s)\Vert A^{\frac{1}{2}}B^{\frac{1}{2}}\eta^t(s)  \Vert^2 \ ds \right)>0. 
\end{eqnarray}
From \eqref{H3.42}, we have
\begin{eqnarray}\label{H3.430}
 \Vert A u(s) \Vert^2
\leq  \frac{2}{1-g_0a_0}E_2(s) \leq
\frac{2}{1-g_0a_0} E_2(0),\ \ \forall s \in \mathbb{R}_+. \nonumber\\
\end{eqnarray}
Therefore, for all $s>t$ 
\begin{eqnarray}\label{H3.440}
\Vert A^{\frac{1}{2}} u(t-s) \Vert \leq  2 \sup_{\tau>0} \Vert A^{\frac{1}{2}} B^{\frac{1}{2}}u_0(\tau) \Vert^2
\leq 2m_0^2.
\end{eqnarray}
Combining  \eqref{H3.430}, \eqref{H3.440}and \eqref{H3.40}, we find
\begin{eqnarray}\label{H3.450}
\xi(t) \int_t^{+\infty} g(s) \Vert A^{\frac{1}{2}} B^{\frac{1}{2}}\eta^t(s) \Vert^2 \
ds &\leq& 2  \xi(t) \int_t^{+\infty} g(s) \Vert A^{\frac{1}{2}} B^{\frac{1}{2}} u(t) \Vert^2 \ ds \nonumber \\
&+& 2  \xi(t) \int_t^{+\infty} g(s) \Vert A^{\frac{1}{2}}B^{\frac{1}{2}} u(t-s) \Vert^2 \ ds 
\nonumber\\
&\leq&  \left(  \frac{4 a_0}{1-g_0a_0} E_2(0)+ 4a_0 m_0^2 \right)\xi(t) \int_t^{+\infty} g(s) \ ds \nonumber\\
&\leq& C h(t). 
\end{eqnarray}

If we repeat the same steps  as in Lemma 4 , and  replace $ A^{\frac{1}{2}}B^{\frac{1}{2}}$ by $A$  in estimate 
\eqref{IL3.6} we end up with
\begin{eqnarray}\label{H3.43}
\xi(t) \int_0^t g(s) \Vert A^{\frac{1}{2}} B^{\frac{1}{2}} \eta^t(s)  \Vert^2  \ ds
&\leq&  C\left[  E_2(0) \int_0^{t} \xi(s) g^{\frac{1}{2}}(s) \ ds  \right]^{\frac{2p-2}{p-1} } 
\left[ \int_0^{t}  \xi(s) g^p(s)  \Vert A^{\frac{1}{2}}B^{\frac{1}{2}} \eta^t(s) \Vert^2 \ ds \right]^{\frac{1}{2p-1}}
\nonumber\\
&\leq& C \left[ \int_0^{t}  \xi(s) g^p(s)  \Vert A^{\frac{1}{2}}B^{\frac{1}{2}} \eta^t(s) \Vert^2 \ ds \right]^{\frac{1}{2p-1}}.
\end{eqnarray}

Using condition \eqref{H1.5}, $(\mathbb{H}_3)$ and combining \eqref{H3.450} and \eqref{H3.43}, we obtain 
\begin{eqnarray}\label{H3.44}
\xi(t)\int_0^{+\infty} g(s) \Vert A^{\frac{1}{2}} \eta^t(s) \Vert^2 \ ds &\leq& a_2
 \xi(t) \int_0^{+\infty} g(s) \Vert A^{\frac{1}{2}}B^{\frac{1}{2}} \eta^t(s) \Vert^2 \ ds \nonumber\\
&\leq& C [-E_2'(t)]^{\frac{1}{2p-1}} + C\ h(t).
\end{eqnarray}

Hence, estimate \eqref{H3.41} will be as
\begin{eqnarray}\label{H3.45}
\xi(t) I_3'(t) \leq -\alpha_1 \xi(t) E(t) + C [-E_2'(t)]^{\frac{1}{2p-1}} + C \ h(t).
\end{eqnarray}
Next, multiplying \eqref{H3.45} by  $ \xi^{\alpha }E^\alpha$ with $ \alpha=2p-2$, and using H\"older's inequality, we get for some $ \tilde{\alpha}_2>0$,
\begin{eqnarray}\label{H3.46}
\xi^{\alpha+1}(t) E^{\alpha}(t) I'_3(t)\leq - \tilde{\alpha}_2 \xi^{\alpha+1}(t)E^{\alpha+1}(t)  - C E'_2(t) 
+C h^{\alpha+1}(t).
\end{eqnarray}
Integrating over $(0,t)$, we find
\begin{eqnarray}\label{H3.47}
\int_0^t \xi^{\alpha+1}(s) E^{\alpha+1}(s) \ ds &\leq& -\frac{1}{\tilde{\alpha}_2} \int_0^t \xi^{\alpha+1}(s) E^{\alpha}(s) I'_3(s) \ ds + \frac{C_\epsilon}{\tilde{\alpha}_2}E_2(0) + \frac{C_\epsilon}{\tilde{\alpha}_2}\int_0^t h^{\alpha+1}(s) \ ds. \nonumber \\
\end{eqnarray}
Now, taking the advantage of the fact that $I_3\sim E$ and  that the energy $ E $ and the function $\xi$ are nonincreasing, we    get
\begin{eqnarray}\label{H3.48}
E^{\alpha+1}(t) \int_0^t \xi^{\alpha+1}(s) \ ds &\leq& - \frac{1}{\tilde{\alpha}_2} \xi^{\alpha+1}(t) E^{\alpha+1}(t)+ \frac{1}{\tilde{\alpha}_2} \xi^{\alpha+1}(0)E^{\alpha+1}(0) + \frac{C}{\tilde{\alpha}_2}E_2(0) + \frac{C}{\tilde{\alpha}_2}\int_0^t h^{\alpha+1}(s) \ ds \nonumber \\
&\leq&  \frac{1}{ \tilde{\alpha}_2} \xi^{\alpha+1}(0)E^{\alpha+1}(0) + \frac{C}{\tilde{\alpha}_2}E_2(0) + \frac{C}{\tilde{\alpha}_2}\int_0^t h^{\alpha+1}(s) \ ds.
\end{eqnarray}
This yields 
\begin{eqnarray}\label{H3.49}
E(t) \leq C \left(  \frac{E_2(0)+E^{\alpha+1}(0)+ \int_0^t h^{\alpha+1}(s) \ ds}{ \int_0^t \xi^{\alpha+1}(s) \ ds } \right)^{ \frac{1}{\alpha+1}}. 
\end{eqnarray}
Therefore,
\begin{eqnarray}\label{H3.50}
E(t) \leq C \left(  \frac{E_2(0)+E^{2p-1}(0)+ \int_0^t h^{2p-1}(s) \ ds}{ \int_0^t \xi^{2p-1}(s) \ ds } \right)^{ \frac{1}{2p-1}}. 
\end{eqnarray}
In order to get \eqref{H1.54}, we use estimate \eqref{H3.42} and follow the same procedure as in estimate \eqref{ID3.26}. We take the operator $ A^{\frac{1}{2}}B^{\frac{1}{2}}$ instead of
$ A^{\frac{1}{2}}$ to find

\begin{eqnarray}\label{H3.51}
\xi(t) \int_0^t g(s) \Vert A^{\frac{1}{2}}B^{\frac{1}{2}} \eta^t(s) \Vert^2 \ ds \leq c [-E'(t) ]^{\frac{-1}{p}}.
\end{eqnarray}
Consequently, from \eqref{H3.450} and \eqref{H3.51}, estimate \eqref{H3.41} will be as
\begin{eqnarray}\label{H3.52}
\xi(t) I_4'(t) &\leq& -\alpha_1 \xi(t) E(t) + C [-E'(t)]^{\frac{-1}{p}}+ C h(t). 
\end{eqnarray}

Finally, by repeating the same steps from \eqref{H3.46} to \eqref{H3.50} with $ \alpha=p-1$, we find   the estimate \eqref{H1.54}, and the proof is now  complete.
\end{proof}

\textbf{Example}:
We illustrate the energy decay rate  given by {Thoerem 3}  through
the following example \\
Let \ $ g(t)=a(1+t)^{-q}, \quad q >2,$ where $a>0$ \ is a constant  so that 
$\int_0^{+\infty} g(t) \ dt < \frac{1}{a}$, then we  have
\begin{eqnarray*}
g'(t)= -a q (1+t)^{- q -1} = - b ( a (1+t)^{-q})^{\frac{q+1}{q}} = -b g^p(t), \quad p=\frac{q+1}{q}, \ b>0.
\end{eqnarray*}

Therefore, for the {Case \eqref{H1.4}}, estimate \eqref{ID3.7} with $ \xi(t)=b$  yields
\begin{eqnarray*}
E(t)\leq C (1+t)^{-\frac{1}{p-1}} \xi^{-\frac{p}{p-1}}(t)\left(  1+  \int_0^t (s+1)^{\frac{1}{p-1}} \xi^{\frac{p}{p-1}} (s) h^{p}(s) \ ds 
 \right). 
\end{eqnarray*}
Let us compute
\begin{eqnarray} 
h(t)=\xi(t) \int_t^{+\infty} g(s) \ ds &=& b \int_t^{+\infty} a (1+s)^{-q} \ ds \nonumber\\
&=& \frac{ab}{q-1} (1+t)^{1-q}, \quad  q=\frac{1}{p-1}.
\end{eqnarray}
Observe that, for some positive constant $C$, it yields
\begin{eqnarray*}
\int_0^t (s+1)^{\frac{1}{p-1}} \xi^{\frac{p}{p-1}} (s) h^{p}(s) \ ds  = b  \frac{ab}{q-1}  \int_0^t (1+s)^{p(1-q)+\frac{1}{p-1}} \ ds = C (1+t)^{p(1-q)+\frac{1}{p-1} +1}-C.  
\end{eqnarray*}
Then, it yields
\begin{eqnarray}\label{H3.53}
E(t)&\leq& C (1+t)^{-\frac{1}{p-1}} \xi^{-\frac{p}{p-1}}(t) \left(  1+C (1+t)^{p(1-q)+\frac{1}{p-1} +1}\right)  \nonumber \\
&=&  C (1+t)^{-\frac{1}{p-1}} + C (1+t)^{p(1-q)+1} \nonumber \\
&=&C (1+t)^{- q} + C(1+t)^{ \frac{- q^2 +q +1}{q}} \leq C(1+t)^{ \frac{- q^2 +q +1}{q}}.
\end{eqnarray}
For the {Case \eqref{H1.5}, estimate \eqref{H1.54} gives
\begin{eqnarray}\label{H3.54}
E(t) &\leq& \left(  \frac{E_2(0)+E^{p}(0)+ \int_0^t h^{p}(s) \ ds}{ \int_0^t \xi^{p}(s) \ ds } \right)^{ \frac{q}{q+1}} \nonumber\\
&\leq& bt^{-\frac{q}{q+1}} \left(  E_2(0)+E^{p}(0)+ \int_0^t h^{p}(s) \ ds \right)^{ \frac{q}{q+1}} \nonumber\\ 
&\leq& C \ t^{-\frac{q}{q+1}}\left( 1-(1+t)^{\frac{(1-q)(q+1)}{q}+1} \right)^{ \frac{q}{q+1}} \leq  C \ t^{-\frac{q}{q+1}}. 
\end{eqnarray} 
Let us compare our result with the one of \cite{A.M14,A11}.  In this way, let us recall the approach  of \cite{A11} with $B=A$, there exists a positive constant $c_1$ such that 
\begin{eqnarray}\label{H3.57}
E(t) \leq  c_1 (1+t)^{-c_2}, \qquad \forall t \in \mathbb{R}_+,
\end{eqnarray}
where $c_2$ is generated by the calculations and it is generally small.\\
Furthermore, the approach of \cite{A11} in polynomial case under $ (A_2)$ and with 
$G(t)=t^{\frac{1}{p}+1}$,  for any $ p \in \left] 0, \frac{q-1}{2} \right[$ gives \\
If  \eqref{H1.4} holds, 
\begin{eqnarray}\label{H3.55}
E(t) \leq C (1+t)^{-p}, \quad \forall p \in \left]0, \frac{q-1}{2} \right[, 
\end{eqnarray} 
and if \eqref{H1.5} holds,
\begin{eqnarray}\label{H3.56}
E(t) \leq C (1+t)^{-\frac{p}{p+1}}, \quad \forall p \in \left]0, \frac{q-1}{2} \right[. 
\end{eqnarray}  
Now, since $ \frac{q^2-q-1}{2}>\frac{q-1}{2}$ for $q>2$. Then from \eqref{H3.53}, \eqref{H3.57} and  \eqref{H3.55} , we conclude that our estimate \eqref{H3.53} gives a  better decay than \eqref{H3.57} and \eqref{H3.55}.\\

For the case \eqref{H1.5}, we see that $ \frac{q}{q+1}> \frac{p}{p+1}$, for any $p \in ]0, \frac{q-1}{2}[$. Then estimate \eqref{H3.54} has better decay than estimate \eqref{H3.56} also for \eqref{H3.53} under some hypothesis on dimension of space. \\

As a conclusion our approach improves and  has a better decay rate  than the one of \cite{A.M14,A11}.\\
\textbf{Acknowledgment:}
The author would like to express his sincere thanks to Professor Aissa Guesmia at University of Lorraine-Metz(France) for his helpful and fruitful comments, and also to the University of Metz during his scholarship.

 \end{document}